\DeclareMathAlphabet{\mathpzc}{OT1}{pzc}{m}{it}
\newtheorem{thm}{Theorem}[section]
\newtheorem{rmk}[thm]{Remark}
\newtheorem{prop}[thm]{Proposition}
\newtheorem{con}[thm]{Conjecture}
\newtheorem{coro}[thm]{Corollary}
\numberwithin{equation}{section}
\def\centerarc[#1](#2)(#3:#4:#5)
\newcommand{\lb}{\left (}
\newcommand{\rb}{\right )}
\newcommand{\lbb}{\left [}
\newcommand{\rbb}{\right ]}
\newcommand{\lbrb}[1]{\lb #1 \rb}
\newcommand{\langlerangle}[1]{\langle#1\rangle}
\newcommand{\lbcurly}{\left\{}
\newcommand{\rbcurly}{\right\}}
\newcommand{\curly}[1]{\lbcurly#1\rbcurly}
\newcommand{\Pbb}[1]{\Pb\lb #1\rb}
\newcommand{\Ebb}[1]{\Eb\lbb #1\rbb}
\newcommand{\limi}[1]{\lim\limits_{#1\to \infty}}
\newcommand{\limo}[1]{\lim\limits_{#1\to 0}}
\newcommand{\Eb}{\mathbb{E}}
\newcommand{\Nb}{\mathbb{N}}
\newcommand{\Rb}{\mathbb{R}}
\newcommand{\Pb}{\mathbb{P}}
\newcommand{\Ac}{\mathcal{A}}
\newcommand{\Pc}{\mathcal{P}}
\renewcommand{\Ac}{\mathtt{A}}
\renewcommand{\Phi}{\phi}
\renewcommand{\nu}{\mu}
\begin{document}
	
	\title[]{Properties and conjectures regarding discrete renewal sequences}
	\author[]{Nikolai Nikolov$^{1,2}$}
	\author[]{Mladen Savov$^{3,4}$}
		\address[]{1: Institute of Mathematics and Informatics, Bulgarian Academy of Sciences, str. Akad. Georgi Bonchev, block 8, Sofia - 1113, Bulgaria}
	\email[]{nik@math.bas.bg}
	\address[]{2: Faculty of Information Sciences,
		State University of Library Studies and Information Technologies,
		Shipchenski prohod 69A, 1574 Sofia,
		Bulgaria
	}
	\address[]{3: Faculty of Mathematics and Informatics, Sofia University ``St Kliment Ohridski'', boul. James Boucher 5, Sofia - 1164, Bulgaria}
	\email[]{msavov@fmi.uni-sofia.bg}
	\address[]{4: Institute of Mathematics and Informatics, Bulgarian Academy of Sciences, str. Akad. Georgi Bonchev, block 8, Sofia - 1113, Bulgaria}
	\email[]{mladensavov@math.bas.bg}
	
	\keywords{Discrete renewal theory, Polynomial approximation}
	\subjclass[2020]{60K05, 32E30}

	\begin{abstract}
		In this work we review and derive some elementary properties of the discrete renewal sequences based on a positive, finite and integer-valued random variable. Our results consider these sequences as dependent on the probability masses of the underlying random variable. In particular we study the minima and the maxima of these sequences and prove that they are attained for indices of the sequences smaller or equal than the support of the underlying random variable. Noting that the minimum itself is a minimum of multi-variate polynomials we conjecture that one universal polynomial envelopes the minimum from below and that it is \textit{maximal} in some sense and \textit{largest} in another.  We prove this conjecture in a special case.
	\end{abstract}
	
	\maketitle
	\section{Introduction} Renewal theory is at the heart of classical probability theory. For this reason new results keep on appearing thereby adding further knowledge about these classical objects, see \cite{Feller2,MiOm_14} for more information. In this note we consider discrete renewal sequences, whose basic theoretical results can be found in \cite{Feller}. We review and obtain some elementary properties of the sequence of renewal masses. First, using the basic renewal equation we establish that the minimum and the maximum of the whole renewal sequence is attained within the support of the underlying probability masses. Second, we show that under natural restrictions the members of the aforementioned sequence are sandwiched between two monotone sequences which jointly converge to the limit of the well-known Blackwell's theorem. Third, given that each renewal mass is a polynomial of the probability masses we consider the minimum of the renewal sequences as a minimum of a finite number of polynomials on the respective simplex. We then introduce a universal polynomial which envelops from below this minimum and we conjecture that it is \textit{maximal} within a natural subset of polynomials. We also show that it is not \textit{largest} amongst the members of this subset. We confirm the conjecture when the renewal sequence is based on probability masses with at most three non-zero terms ( the random variable takes values only in $\curly{1,2,3}$). We show that within a subclass of polynomials the aforementioned universal polynomial is \textit{largest}. These conjectures are interesting from the standpoint of approximation theory as they involve multivariate polynomial approximation from below of a minima of a finite set of multivariate polynomials.
	\section{Notation and preliminary facts}\label{sec:not}
	In this note we work with a sequence of independent, positive, integer-valued random variables $\lbrb{X_i}_{i\geq 1}$ defined on a common probability space. Furthermore, we assume that they are mutually independent and identically distributed with finite support, that is $\Pbb{X_1=l}=p_l\geq 0, 1\leq l\leq k,$ and $\sum_{l=1}^{k}p_l=1$. Extend $p_n=0$ for $n\geq k+1$. We consider the increasing random walk $S:=\lbrb{S_n}_{n\geq 0}$ defined as $S_0=0$ and $S_n=\sum_{k=1}^n X_k,n\geq 1$, and the related to it renewal masses
	\begin{equation}\label{eq:u}
		u_n=\Pbb{n\in\curly{S_0,S_1,\cdots}}=\sum_{l=0}^\infty \Pbb{S_l=n},\,\,n\geq 0.
	\end{equation}
  Obviously $u_0=1$ and the well-known recurrent relation holds
  \begin{equation}\label{eq:rec}
  	u_n=\sum_{l=1}^n p_lu_{n-l}=\sum_{l=1}^{\min\curly{n,k}}p_lu_{n-l},
  \end{equation}
  where we have used that $p_l=0,l\geq k+1$. Blackwell's theorem states that for aperiodic random walks, that is their support is $\Nb$ or equivalently for all large $n$, $S_n$ does not live on a sub-lattice of $\Nb$,
  \begin{equation}\label{eq:lim}
  	\limi{n}u_n=\frac{1}{\Ebb{X_1}}=\frac{1}{\sum_{l=1}^k lp_l}
  \end{equation}
  see \cite[XIII Theorem 3]{Feller}.
  For example, the random walk is clearly aperiodic if $p_1>0$.
  For the purpose of our investigation we introduce
  \begin{equation}\label{def:Mm}
  	M_k=\max_{l\geq 1}\curly{ u_l} \text{ and } m_k=\min_{l\geq 1}\curly{ u_l}.
  \end{equation}
  From Proposition \ref{prop:Mm} we see that $m_k=m_k(p_1,\cdots,p_{k-1})$ and it is the minimum of $k-1$ multi-variate polynomials.
  Next, with each random walk as defined above, \textbf{we introduce for $1\leq n\leq k$, with the convention $\prod_{j=1}^0=1$,}
  \begin{equation}\label{def:poly}
  	\mathbf{Q_n\lbrb{p_1,p_2,\cdots, p_{n-1}}=\prod_{j=1}^{n-1}\sum_{l=1}^j p_l,}
  \end{equation}
  
  Note that if $\sum_{l=1}^jp_l=1, 1\leq j<k,$ and $\sum_{l=1}^{j-1}p_l<1$ then nominally $Q_k$ depends at most on $p_1,\cdots, p_{j-1}$. We note that in this context we are interested  in the behaviour of $Q_k$ on
  \begin{equation}\label{eq:reg}
  	\begin{split}
  		&A_k=\curly{\lbrb{p_1,\cdots,p_{k-1}}: p_l\geq 0, 1\leq l\leq k-1; \sum_{l=1}^{k-1}p_l\leq 1}\subseteq \Rb^{k-1}.
  	\end{split}
  \end{equation}
We set $\Pc_k$ for the set of polynomials of $k-1$ variables. We introduce partial ordering in $\Pc_k$ in the following manner: we say that $P_1\prec P_2, P_1,P_2\in \Pc_k,$  if and only if $P_1\leq P_2$ on $A_k$. This is inherited from the partial ordering of functions of $k-1$ variables on $A_k$ defined as $f\prec g\iff f\leq  g$ on $A_k$. Furthermore, we consider
\begin{equation}\label{def:Ac}
	\begin{split}
		&\Ac_k:=\curly{P\in \Pc_k: deg(P)\leq k-1,\,P\prec m_k },
	\end{split}
\end{equation}
where $deg(P)$ is the power of $P$, i.e. the highest combined power of every monomial constituting $P$.
We say that $P\in \Ac_k$ is \textit{maximal} if and only if
\begin{equation}\label{eq:max}
	\begin{split}
		& \text{$\tilde{P}\in \Ac_k$ and $\tilde{P}\succ P$ $\Rightarrow$ $\tilde{P}=P$.}
	\end{split}
\end{equation}
\textit{Largest} element is $P\in \Ac_k$ such that $P\succ Q$ for all $Q\in\Ac_k$. Furthermore, for any $P\in\Ac_k$ we set $\hat{P}(p_1,\cdots,p_{k-1})=P(p_1,p_2-p_1,\dots, p_k-p_{k-1})$ and $\hat{P}_j(p_1,\cdots,p_{j})=\hat{P}(p_1,\cdots,p_{j},1,\cdots,1),j\leq k-1$. We set
\begin{equation}\label{def:Ac1}
	\begin{split}
		&\hat{\Ac}_k=\curly{P\in\Ac_k: \deg(\hat{P}_j)\leq j\,\forall 1\leq j\leq k-1}.
	\end{split}
\end{equation}
Let us comment briefly on the classes $\Ac_k$ and $\hat{\Ac}_k$. According to Corollary \ref{cor:poly} the running minimum $m_k$ as defined in \eqref{def:Mm} is a minimum of polynomials of $p_1,\dots, p_{k-1}$. Then $\Ac_k$ is a natural choice for class polynomials that envelop from below $m_k$ uniformly in $k$. Theorem \ref{thm:poly}, however, reveals that $\Ac_k$ does not possess a maximal element, and therefore an unique element that envelops $m_k$ from below. The structure of $u_l$ suggests our choice of $\hat{A}_k$ and for $k=3$ it possesses a largest element thereby confirming our Conjecture \ref{con1}.

Here and hereafter we shall consider solely random walks $S$ and their renewal masses as defined above. Extensive account of renewal theory can be found in \cite{Feller,Feller2}.
\section{Main results and Conjectures}
The first result shows that $M_k$ ( respectively $m_k$) are attained within the first $k$ ( respectively $k-1$) of the renewal masses.
\begin{prop}\label{prop:Mm}
	Let $\lbrb{u_n}_{n\geq 0}$ be the renewal masses of
	the random walk $S$ defined above. Then
	\begin{equation}\label{eq:Mm}
		1\geq M_k=\max_{1\leq l\leq k} u_l \text{ and } m_k=\max_{1\leq l\leq k-1} u_l\geq 0.
	\end{equation}
	$M_k=1 \iff p_j=1$, for some $k\geq j\geq 1$, and $m_k=0\iff p_1=0$.
	Moreover, if $p_j\neq 1$, for all $k\geq j\geq 1$, then for $n>k$, $u_n<M_k$, and, if the walk is aperiodic $u_n>m_k$, for $n>k-1$. Next, if $p_1\in\lbrb{0,1}$  the sequence $\curly{u_n}_{n\geq 1}$ is not ultimately monotone and
	\begin{equation}\label{eq:Mm1}
		\begin{split}
			& u_n<\max_{1 \leq l\leq k}\curly{u_{n-l}}=:b_n \text{ for $n>k$};\\
			& u_n>\min_{1 \leq l\leq k}\curly{u_{n-l}}=:c_n \text{ for $n>k-1$}.
		\end{split}
	\end{equation}
	Finally, the sequences $\curly{b_n}_{n\geq k}$ and $\curly{c_n}_{n\geq k-1}$ are respectively monotone non-increasing and monotone non-decreasing.
\end{prop}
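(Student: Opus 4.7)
My plan is to exploit the convex-combination structure of the renewal identity \eqref{eq:rec}. For $n>k$, $u_n=\sum_{l=1}^k p_l u_{n-l}$ with $\sum_{l=1}^k p_l=1$, so $u_n$ is a convex combination of $u_{n-1},\ldots,u_{n-k}$. A straightforward induction on $n$ gives
\[
\min(u_{n-1},\ldots,u_{n-k})\leq u_n\leq\max(u_{n-1},\ldots,u_{n-k}),
\]
which, combined with the trivial probabilistic bound $u_n\leq 1$ and with $u_0=1$, immediately yields $M_k=\max_{1\leq l\leq k}u_l\leq 1$ and $m_k=\min_{1\leq l\leq k-1}u_l$ (the index $l=0$ can be dropped from the min since $u_0=1\geq u_l$ for every $l\geq 1$, and propagated forward by the same induction).

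For the equality cases I would first observe that $p_j=1$ obviously forces $u_j=1$, hence $M_k=1$. Conversely, if $u_{l^*}=1$ with $l^*\in\{1,\ldots,k\}$ minimal, equality in $1=\sum_{l=1}^{\min(l^*,k)}p_l u_{l^*-l}\leq\sum_{l=1}^{\min(l^*,k)}p_l\leq 1$ forces $u_{l^*-l}=1$ whenever $p_l>0$ and $l<l^*$, contradicting minimality unless $p_{l^*}=1$. Similarly $p_1=0$ yields $u_1=0$ hence $m_k=0$, while $p_1>0$ gives $u_n\geq p_1^n>0$ through the deterministic path $1+1+\cdots+1$. The strict inequalities $u_n<M_k$ for $n>k$ (under $p_j\neq 1$, so $M_k<1$) and $u_n>m_k$ for $n>k-1$ (under aperiodicity) are obtained by a descent argument: if $u_n=M_k$ for some $n>k$ we pick such an $n$ minimal; then the convex combination forces $u_{n-l}=M_k$ for every $l\in S':=\{l:p_l>0\}$. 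Since $u_n>0$, the index $n$ lies in the semigroup generated by $S'$, and by iterating the descent along a representation $n=\sum a_l l$ we eventually reach $u_0=1$, contradicting $M_k<1$. The symmetric descent handles $u_n>m_k$, where aperiodicity is needed to guarantee that the semigroup of $S'$ is cofinite and $u_0=1>m_k$ yields the contradiction.

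The sandwich inequalities \eqref{eq:Mm1} are essentially automatic: $b_n$ and $c_n$ are the maximum and minimum over the sliding window $\{u_{n-1},\ldots,u_{n-k}\}$ of which $u_n$ is a convex combination, so $c_n\leq u_n\leq b_n$, with strict inequalities under $p_1\in(0,1)$ by the descent argument above. Monotonicity $b_{n+1}\leq b_n$ follows because passing from the window at $n$ to the window at $n+1$ drops $u_{n-k}$ and inserts $u_n\leq b_n$; symmetrically $c_{n+1}\geq c_n$. Finally, to rule out ultimate monotonicity when $p_1\in(0,1)$, suppose $\{u_n\}$ were eventually non-increasing; then for every large $n$ and every $l\geq 1$ in the support we would have $u_{n+1-l}\geq u_n\geq u_{n+1}$, forcing $u_{n+1}=u_n$ by the convex-combination identity. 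Thus the sequence would be eventually constant equal to $1/\mu<1$, but propagating the equality backward through \eqref{eq:rec} (using $p_1>0$) would eventually force $u_0=1/\mu$, contradicting $u_0=1$. The main technical obstacle I anticipate is making the descent argument watertight at indices $m<k$, where \eqref{eq:rec} is no longer a proper convex combination; the remedy is to treat the term $p_m u_0=p_m$ as an explicit "excess" over $p_m M_k$ and use $M_k<1$ to show that the forced equalities still propagate strictly.
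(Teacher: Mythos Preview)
Your overall strategy is the paper's: use that $u_n$ for $n>k$ is a genuine convex combination of $u_{n-1},\dots,u_{n-k}$, then run a descent from a hypothetical equality $u_n=M_k$ down to a contradiction. The paper is in fact less careful than you are: it asserts that the forcing gives ``$u_{n-l}=M$ for all $1\le l\le k$'' and then ``$u_l=M$ for all $l\ge1$'', which tacitly assumes every $p_l>0$. Your semigroup phrasing and your explicit flag about indices $m\le k$ are a more honest accounting of the same idea, and your treatment of the monotonicity of $(b_n),(c_n)$ via the sliding window is the same as the paper's contradiction argument.

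The obstacle you anticipate, however, is not a mere technicality, and your ``excess'' remedy cannot repair it: the strict inequalities $u_n<M_k$ for $n>k$ and $u_n<b_n$ are \emph{false} once interior zeros among $p_1,\dots,p_k$ are permitted. Take $k=3$, $p_2=0$, $p_1=(\sqrt5-1)/2$, $p_3=1-p_1$ (so $p_1^2+p_1=1$, hence $p_1^2=p_3$). One checks directly that $u_1=p_1$, $u_2=p_1^2$, $u_3=p_1^3+p_3=(2p_1-1)+(1-p_1)=p_1$, and $u_4=p_1u_3+p_3u_1=p_1(p_1+p_3)=p_1$; thus $M_3=b_4=p_1=u_4$ and both strict claims fail at $n=4$. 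In the descent from $u_4=M_3$ the forcing yields only $u_3=u_1=M_3$ (not $u_2$, since $p_2=0$); at $u_3=M_3$ the positive ``excess'' $p_3(1-M_3)$ is exactly cancelled by the deficit $p_1(u_2-M_3)<0$, so no contradiction emerges and the descent stalls. The non-strict statements---$M_k=\max_{1\le l\le k}u_l$, $c_n\le u_n\le b_n$, and the monotonicity of $(b_n),(c_n)$---are unaffected and your arguments for them are fine; the strict claims (and the paper's own proof of them) go through only under the additional hypothesis that $p_l>0$ for every $1\le l\le k$.
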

\begin{rmk}\label{rem:Mk1}
The final claim establishes that, provided $p_1\in\lbrb{0,1}$,  $u_n$ is sandwiched between two monotone sequences.
\end{rmk}
Next we formulate a simple result that is used below.
\begin{prop}\label{prop:poly}
	Let $\lbrb{u_n}_{n\geq 0}$ be the renewal masses of
	the random walk $S$ defined above. Then, for $l\geq 1$,
	\begin{equation}\label{eq:poly}
	u_l =P_l\lbrb{p_1,\cdots, p_{\min\curly{k-1,l}}},
	\end{equation}
  where $P_l,l\geq 1$ satisfy $deg(P_l)=\min\curly{k-1,l}$. For each $P_l$ each variable $p_j,j\leq l$, has a  highest possible power of precisely $l| j$ or the integer division of $l$ by $j$. Finally, for any $t>0$, and any $l\geq 1$
  	\begin{equation}\label{eq:poly'}
  	P_l\lbrb{t^{\frac{1}{l}}p_1, t^{\frac{2}{l}}p_2,\cdots, t^{\frac{\min\curly{k-1,l}}{l}}p_{\min\curly{k-1,l}}}=tP_l\lbrb{p_1,\cdots, p_{\min\curly{k-1,l}}}.
  \end{equation}
\end{prop}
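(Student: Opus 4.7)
The plan is induction on $l$ using the renewal recurrence \eqref{eq:rec}. The base case $l=1$ is immediate: $u_1 = p_1 u_0 = p_1$, so $P_1(p_1) = p_1$ trivially fulfils all three assertions with $\lfloor 1/1\rfloor = 1$. For the inductive step, assuming the claim for all $u_m$ with $m < l$, the recursion yields
\[
u_l \;=\; \sum_{i=1}^{\min\{l,k\}} p_i\, P_{l-i}\bigl(p_1,\dots,p_{\min\{k-1,l-i\}}\bigr),
\]
a polynomial in the relevant masses; when $l \geq k$ the variable $p_k$ arising in the $i=k$ summand is eliminated using $p_k = 1 - \sum_{j<k} p_j$, the constraint defining $A_k$.

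The weighted homogeneity \eqref{eq:poly'} is the cleanest piece. Assigning weight $j$ to $p_j$, the inductive hypothesis makes every monomial of $P_{l-i}$ weight-homogeneous of weighted degree $l-i$; multiplication by $p_i$ raises the weight to $l$, so every monomial of $P_l$ has weighted degree exactly $l$, and the substitution $p_j \mapsto t^{j/l} p_j$ scales each monomial, and hence $P_l$ itself, by $t^{l/l} = t$. The maximum-power claim follows in the same spirit: the $i=j$ summand provides, by the inductive hypothesis applied to $P_{l-j}$, a monomial carrying $p_j^{\lfloor(l-j)/j\rfloor}$, which the outer factor $p_j$ elevates to $p_j^{\lfloor l/j\rfloor}$; any other summand $p_i P_{l-i}$ with $i\neq j$ carries at most $p_j^{\lfloor(l-i)/j\rfloor} \leq p_j^{\lfloor l/j\rfloor}$, so the advertised bound is never surpassed.

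The main subtlety, and the step I expect to be the principal obstacle, is ruling out cancellation of the candidate top monomial $p_j^{\lfloor l/j\rfloor}$ across summands, which would artificially lower the realised power (a concern that becomes delicate once the substitution for $p_k$ has been performed on $A_k$). The cleanest remedy is to sidestep this inductive bookkeeping with the explicit nonnegative expansion
\[
u_l \;=\; \sum_{\substack{(n_1,\dots,n_k) \in \Nb^k \\ \sum_j j\, n_j = l}} \binom{n_1+\cdots+n_k}{n_1,\dots,n_k}\, p_1^{n_1}\cdots p_k^{n_k},
\]
obtained from $u_l = \sum_{m \geq 0} \Pbb{S_m = l}$ by grouping compositions of $l$ according to their type. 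Since every multinomial coefficient is strictly positive, the type $(n_j, n_1) = (\lfloor l/j\rfloor,\, l - j\lfloor l/j\rfloor)$ (with all remaining $n_i = 0$) realises the maximal $p_j$-exponent, while the type $(n_1, n_2, \dots) = (l, 0, \dots)$ realises the extremal total degree, simultaneously pinning down all three assertions. As these extremal monomials do not involve $p_k$, they are preserved unchanged under the substitution $p_k = 1 - \sum_{j<k} p_j$ required to view $P_l$ as a polynomial on $A_k$, completing the argument.
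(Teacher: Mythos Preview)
Your argument is correct and ultimately coincides with the paper's: the paper also reasons directly from the path interpretation of $u_l$ (your multinomial expansion made informal), observing that any path to $l$ uses at most $\lfloor l/j\rfloor$ steps of size $j$, that this bound is attained by padding with unit steps, and that every path contributes a monomial of weighted degree exactly $l$. Your inductive preamble is extra scaffolding you rightly abandon in favour of the direct expansion, and your remark that the extremal monomials survive the substitution $p_k = 1 - \sum_{j<k} p_j$ is a point of care the paper glosses over.
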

We have the immediate corollary.
\begin{coro}\label{cor:poly}
	Let $\lbrb{u_n}_{n\geq 0}$ be the renewal masses of
	the random walk $S$ defined above. Then
	\begin{equation}\label{eq:poly1}
		\begin{split}
			&M_k=M_k\lbrb{p_1,p_2,\cdots, p_{k-1}}=\max_{1\leq l\leq k}\curly{P_l\lbrb{p_1,\cdots, p_{\min\curly{k-1,l}}}}\\
			&m_k=m_k\lbrb{p_1,p_2,\cdots, p_{k-1}}=\min_{1\leq l\leq k-1}\curly{P_l\lbrb{p_1,\cdots, p_{l}}}.
		\end{split}
	\end{equation}
\end{coro}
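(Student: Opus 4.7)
The plan is to obtain Corollary \ref{cor:poly} as a direct consequence of the two preceding results. Specifically, Proposition \ref{prop:Mm} identifies $M_k$ and $m_k$ as the maximum and minimum over the finite index ranges $1\leq l\leq k$ and $1\leq l\leq k-1$ of the renewal masses $u_l$, while Proposition \ref{prop:poly} identifies each individual $u_l$ with a specific polynomial $P_l$ in $p_1,\ldots,p_{\min\{k-1,l\}}$. The corollary is thus obtained by substitution, without any further argument needed.

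Concretely, I would first invoke Proposition \ref{prop:Mm} to reduce both quantities to finite maxima/minima of renewal masses, writing
\[
M_k=\max_{1\leq l\leq k} u_l,\qquad m_k=\min_{1\leq l\leq k-1} u_l.
\]
Next I would apply the polynomial representation of Proposition \ref{prop:poly}: for every $l\geq 1$, one has $u_l=P_l(p_1,\ldots,p_{\min\{k-1,l\}})$. For $1\leq l\leq k-1$ this simplifies to $u_l=P_l(p_1,\ldots,p_l)$, while for $l=k$ it becomes $u_k=P_k(p_1,\ldots,p_{k-1})$. Substituting these two expressions into the two displays for $M_k$ and $m_k$ immediately yields the formulas in \eqref{eq:poly1}.

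There is no genuine obstacle here; the only book-keeping point is to note that, on the region $A_k$, the polynomials $P_l$ depend solely on the first $k-1$ probability masses $p_1,\ldots,p_{k-1}$ (the remaining ones being zero by assumption), so that the notation $M_k=M_k(p_1,\ldots,p_{k-1})$ and $m_k=m_k(p_1,\ldots,p_{k-1})$ is consistent. Thus the corollary records, in polynomial form, precisely what the two earlier propositions already establish.
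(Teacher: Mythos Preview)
Your proposal is correct and matches the paper's approach exactly: the paper gives no separate proof, simply labeling the result an ``immediate corollary'' of Propositions~\ref{prop:Mm} and~\ref{prop:poly}, which is precisely the substitution argument you describe. One small wording slip: the reason $M_k$ and $m_k$ depend only on $p_1,\ldots,p_{k-1}$ is not that the ``remaining'' masses vanish, but that $p_k=1-\sum_{l=1}^{k-1}p_l$ is determined by them (this is already built into the statement of Proposition~\ref{prop:poly}); this does not affect the argument.
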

Next, we state a result which estimates $m_k$ from below with $Q_k$.
\begin{prop}\label{prop:poly1}
Let $\lbrb{u_n}_{n\geq 0}$ be the renewal masses of
the random walk $S$ defined above. Then
\begin{equation}\label{eq:bound}
	m_k\geq Q_k.
\end{equation}
Furthermore, for any $1\leq n \leq k-1$,
\begin{equation}\label{eq:bound1}
	u_n\geq Q_{n+1}.
\end{equation}
\end{prop}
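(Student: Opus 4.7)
The plan is to prove \eqref{eq:bound1} first by strong induction on $n$, and then deduce \eqref{eq:bound} as an immediate consequence. The key observation is that, setting $A_m := Q_{m+1} = \prod_{j=1}^{m} F_j$ with $F_j := \sum_{l=1}^{j} p_l$, the sequence $(A_m)_{m \geq 0}$ is non-increasing in $m$: indeed $A_m = A_{m-1} F_m$ and $F_m \leq 1$. This monotonicity is the single fact that drives everything.

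For the base case $n = 1$, one has $u_1 = p_1 = F_1 = Q_2$ directly from \eqref{eq:rec}. For the inductive step, assume $u_{m} \geq Q_{m+1} = A_m$ for every $1 \leq m < n$, where $1 < n \leq k-1$. Since $n \leq k-1 < k$, the recurrence \eqref{eq:rec} reads $u_n = \sum_{l=1}^{n} p_l u_{n-l}$, and applying the induction hypothesis (with the convention $u_0 = 1 = A_0$) gives
\begin{equation*}
u_n \;\geq\; \sum_{l=1}^{n} p_l A_{n-l}.
\end{equation*}
Using the monotonicity noted above, $A_{n-l} \geq A_{n-1}$ for every $l \geq 1$, so
\begin{equation*}
\sum_{l=1}^{n} p_l A_{n-l} \;\geq\; A_{n-1} \sum_{l=1}^{n} p_l \;=\; A_{n-1} F_n \;=\; A_n \;=\; Q_{n+1},
\end{equation*}
which closes the induction and establishes \eqref{eq:bound1}.

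For \eqref{eq:bound}, combine \eqref{eq:bound1} with Proposition \ref{prop:Mm}, which asserts $m_k = \min_{1 \leq l \leq k-1} u_l$. By \eqref{eq:bound1} each such $u_l$ is bounded below by $Q_{l+1} = A_l$, and since $A_l$ is non-increasing in $l$, the minimum of $\{A_l : 1 \leq l \leq k-1\}$ is $A_{k-1} = Q_k$; hence $m_k \geq Q_k$.

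I do not anticipate a substantive obstacle: the only nontrivial ingredient is the observation that $A_m$ is non-increasing (equivalent to $F_m \leq 1$), and the rest is a clean telescoping of the renewal recurrence. Care must be taken only to note that the argument is confined to the range $1 \leq n \leq k-1$ so that the recurrence \eqref{eq:rec} retains all $n$ terms and the bound $A_{n-l} \geq A_{n-1}$ remains valid for every $l$ appearing in the sum.
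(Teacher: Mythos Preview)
Your proof is correct and follows essentially the same approach as the paper's: both argue by induction on $n$, apply the renewal recurrence \eqref{eq:rec}, and use the monotonicity of $Q_{m}$ (equivalently of your $A_m$) to factor out the smallest term and telescope to $Q_{n+1}$. Your write-up is in fact cleaner, as it handles the $u_0=1=A_0$ boundary and the monotonicity step more explicitly than the paper does.
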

We proceed to prove our main result concerning the number of maximal elements in  $\Ac_k$.
\begin{thm}\label{thm:poly}
	Let $\lbrb{u_n}_{n\geq 0}$ be the renewal masses of
	the random walk $S$ defined above. For $k\geq 3$ there is no \textit{largest} element in $\Ac_k$.
\end{thm}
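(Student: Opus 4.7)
The plan is to assume for contradiction that a largest element $P^* \in \mathcal{A}_k$ exists and derive incompatible conclusions from two independent observations: first, that $P^*$ must coincide with $m_k$ on all of $\mathrm{int}(A_k)$; second, that on different open pieces of $\mathrm{int}(A_k)$ the function $m_k$ agrees with two distinct polynomials $u_l$, forcing $P^*$ itself to equal two distinct polynomials globally.

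To carry out the first step, I would use a degree-$2$ bump argument. If there were some $x_0 \in \mathrm{int}(A_k)$ with $P^*(x_0) < m_k(x_0)$, I would consider the polynomial $\phi_\epsilon(p) = \epsilon - \sum_{i=1}^{k-1}(p_i - x_{0,i})^2$. Since $m_k - P^*$ is continuous, nonnegative on $A_k$, and strictly positive at $x_0$, for $\epsilon>0$ sufficiently small one has $\phi_\epsilon \leq m_k - P^*$ on $A_k$: outside the Euclidean ball of radius $\sqrt{\epsilon}$ around $x_0$ the inequality is automatic from $\phi_\epsilon \leq 0$, while inside the ball continuity gives $m_k - P^* \geq \epsilon \geq \phi_\epsilon$. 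Since $k\geq 3$ ensures $\deg(\phi_\epsilon)=2\leq k-1$, this places $P^*+\phi_\epsilon$ in $\mathcal{A}_k$ while making it strictly exceed $P^*$ at $x_0$, contradicting largest. Hence $P^*=m_k$ throughout $\mathrm{int}(A_k)$.

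For the second step, I would exhibit two boundary points of $A_k$ at which the minimum $m_k$ is achieved strictly by two distinct $u_l$ and extend this to open neighborhoods by continuity. At $x_0:=(\tfrac12,0,\ldots,0)$ the underlying random variable takes values only in $\{1,k\}$, so \eqref{eq:rec} yields $u_l(x_0)=2^{-l}$ for $1\leq l\leq k-1$; thus $m_k(x_0)=u_{k-1}(x_0)$ with strict inequality $u_{k-1}(x_0)<u_l(x_0)$ for every $l<k-1$, and continuity of each polynomial $u_l$ extends these strict inequalities to a nonempty open set $U\subset \mathrm{int}(A_k)$ on which $m_k=u_{k-1}$. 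At $y_0:=(\tfrac12,\tfrac12,0,\ldots,0)$ the random variable takes values only in $\{1,2\}$, and solving the associated linear recurrence gives $u_l(y_0)=\tfrac23+\tfrac13(-\tfrac12)^l$, whence $u_1(y_0)=\tfrac12$ is strictly the smallest among $u_1,\ldots,u_{k-1}$; continuity again produces a nonempty open set $V\subset \mathrm{int}(A_k)$ on which $m_k=u_1=p_1$.

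Combining the two steps, on $U$ the polynomial $P^*$ agrees with the polynomial $u_{k-1}$, so by the identity theorem for polynomials (agreement on a nonempty Euclidean open set forces equality as polynomials) $P^*=u_{k-1}$ on $\mathbb{R}^{k-1}$; by the same argument applied on $V$, $P^*=p_1$ globally. Hence $u_{k-1}=p_1$, but Proposition \ref{prop:poly} gives $\deg(u_{k-1})=k-1\geq 2>1=\deg(p_1)$, a contradiction. The step I expect to be the most delicate is the stability argument extending the strict inequalities at the boundary points $x_0$ and $y_0$ to full open neighborhoods inside $\mathrm{int}(A_k)$; this is essentially routine given polynomial continuity of the $u_l$, but requires care so that the neighborhoods genuinely lie in the open simplex $\mathrm{int}(A_k)$ (e.g.\ by taking points of the form $(\tfrac12-(k-2)\delta,\delta,\ldots,\delta)$ with $\delta>0$ small).
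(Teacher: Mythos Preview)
Your argument is correct and follows essentially the same strategy as the paper: assume a largest element $P^*$ exists, use a quadratic perturbation to force $P^*=m_k$ on a dense subset of $A_k$, and then observe this is impossible because $m_k$ is not a single polynomial. The paper's bump is $P_l(p)-a\lvert p-p_0\rvert^2$ (subtracting from the local minimizer), while you add $\epsilon-\lvert p-x_0\rvert^2$ to $P^*$; these are dual formulations of the same idea, and both rely on $k\geq 3$ so that the quadratic has admissible degree. Your version is in fact more explicit than the paper's at the final step: the paper simply asserts that $P=m_k$ ``is impossible,'' whereas you verify concretely, via the points $(\tfrac12,0,\ldots,0)$ and $(\tfrac12,\tfrac12,0,\ldots,0)$, that $m_k$ coincides with the distinct polynomials $u_{k-1}$ and $u_1$ on nonempty open subsets of $A_k$, and then invoke the identity principle together with $\deg(u_{k-1})=k-1>1$.
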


Given that $\Ac_k,k\geq 3,$ has no \textit{largest} element we formulate the following conjecture.
\begin{con}\label{con1}
	For any $k\geq 3$, $Q_k$ is a maximal element in $\Ac_k$ and it is largest in $\hat{\Ac}_k$.
\end{con}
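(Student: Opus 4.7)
The conjecture splits into (a) $Q_k$ is maximal in $\Ac_k$ and (b) $Q_k$ is largest in $\hat{\Ac}_k$; I would attack them by complementary strategies that both exploit the factored form $Q_k = \prod_{j=1}^{k-1} \sigma_j$ with $\sigma_j := \sum_{l=1}^{j} p_l$, and that work in the $q$-coordinates $q_j := \sigma_j$ (in which $\hat{Q}_k = q_1 q_2 \cdots q_{k-1}$ and $A_k$ becomes the region $T_k := \{0 \leq q_1 \leq \cdots \leq q_{k-1} \leq 1\}$). For part (a) the plan is a \emph{contact-set} argument. Let $\tilde{P} \in \Ac_k$ with $\tilde{P} \succ Q_k$, and set $R := \tilde{P} - Q_k$; then $R$ has degree $\leq k-1$ and satisfies $0 \leq R \leq m_k - Q_k$ on $A_k$. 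I would exhibit a set $\mathcal{C}_k \subseteq A_k$ on which $Q_k = m_k$, forcing $R|_{\mathcal{C}_k} \equiv 0$, and then conclude from the degree bound and the sandwich that $R \equiv 0$ everywhere. My candidate is the $1$-skeleton $\mathcal{C}_k = \bigcup_{i=2}^{k} E_i$, where $E_i$ is the edge of $A_k$ joining $e_1 = (1, 0, \ldots, 0)$ to the vertex at which the walk concentrates all mass at step $i$. Parametrizing $E_i$ by $t \in [0, 1]$, so that mass $1-t$ sits on $1$ and mass $t$ on $i$, Proposition \ref{prop:Mm} applied to this sub-walk gives $m_k|_{E_i} = u_{i-1}|_{E_i} = (1-t)^{i-1}$, which coincides with $Q_k|_{E_i} = (1-t)^{i-1}$ since $\sigma_j = 1-t$ for $j < i$ and $\sigma_j = 1$ for $j \geq i$ on $E_i$. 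For $k = 3$ this cuts $R$ down to the one-parameter family $R = c\, p_2 p_3$, after which $R \geq 0$ gives $c \geq 0$ and the bound $R \leq m_3 - Q_3 = \min\{p_1 p_3,\, p_2(1-p_1)\}$ evaluated at $p_1 = 0$ gives $c \leq 0$, so $c = 0$. For general $k$ the plan is to compute the kernel of the restriction $R \mapsto R|_{\mathcal{C}_k}$ inside polynomials of degree $\leq k-1$ and annihilate it by probing the sandwich at strategically chosen corners and faces of $A_k$.

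For part (b) I would induct on $k$ with base case $k = 3$. Given $P \in \hat{\Ac}_k$, set $P^{(j)}(p_1, \ldots, p_j) := P(p_1, \ldots, p_j, 1 - \sigma_j, 0, \ldots, 0)$, so that $\widehat{P^{(j)}} = \hat{P}_j$ after the hat-transform. One verifies $P^{(j)} \in \hat{\Ac}_{j+1}$: the bound $\deg P^{(j)} \leq j$ equals $\deg \hat{P}_j \leq j$; the identity $(\widehat{P^{(j)}})_i = \hat{P}_i$ transfers the hypotheses $\deg \hat{P}_i \leq i$ for $i \leq j$; and $P^{(j)} \leq m_{j+1}$ on $A_{j+1}$ follows by restricting $P \leq m_k$ to the face of $A_k$ on which the sub-walk lives in $\{1, \ldots, j+1\}$ and invoking Proposition \ref{prop:Mm}, which for the aperiodic sub-walk forces $u_l > m_{j+1}$ for $l > j$, so $m_k = m_{j+1}$ on that face. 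The inductive hypothesis then yields $\hat{P}_j \leq q_1 q_2 \cdots q_j = \hat{Q}_{j+1}$ for each $j = 1, \ldots, k-2$, and in particular $\hat{P} \leq \hat{Q}_k$ on the face $\{q_{k-1} = 1\}$ of $T_k$.

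The main obstacle, shared by both parts, is propagating information from a low-dimensional skeleton to the interior. For (b), the $k = 3$ case is clean because the single condition $\deg \hat{P}_1 \leq 1$ forces the $q_1^2$-coefficient of $\hat{P}$ to vanish, so $G := \hat{Q}_3 - \hat{P}$ is linear in $q_1$ for each fixed $q_2$, and the endpoint inequalities $G(0, q_2) \geq 0$ and $G(q_2, q_2) \geq 0$ propagate across the slice. For $k \geq 4$ the conditions $\deg \hat{P}_j \leq j$ constrain only sums of coefficients rather than individual monomials, so slice-wise linearity is lost; overcoming this will likely need a nested induction that strips one variable at a time while extracting further sign constraints, or a convexity/Markov-type argument exploiting the product structure of $\hat{Q}_k$. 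For (a), the analogous difficulty is that the kernel of restriction to $\mathcal{C}_k$ may grow with $k$ and exceed what the sandwich can kill; one may need to enlarge $\mathcal{C}_k$ by locating additional faces on which $m_k$ matches the appropriate factor of $Q_k$, or to find an algebraic identity writing $m_k - Q_k$ as a non-negative combination of products of the $p_j$'s.
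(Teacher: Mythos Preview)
The statement is a \emph{conjecture}: the paper does not prove it in general, only the special case $k=3$ (Proposition~\ref{prop:con}). Your proposal is honest about this---you give a complete argument for $k=3$ and a strategy with explicitly flagged obstacles for larger $k$. So the right comparison is between your $k=3$ arguments and the paper's.

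For part (a) at $k=3$, the paper argues analytically: it divides through by $p_1$, sends $p_1\to 0$ to force $Q=p_1(a+bp_1+cp_2)$, then pins down $a,b,c$ by evaluating on the edges $p_2=0$ and $p_1\to 1$. You instead use a contact-set/vanishing argument: on the two edges $E_2,E_3$ one has $Q_3=m_3$, so $R:=\tilde P-Q_3$ vanishes there, which forces $R=c\,p_2(1-p_1-p_2)$, and then the sandwich at $p_1=0$ kills $c$. Both are short and correct; your version is more geometric and suggests the right generalisation (enlarge the contact set), while the paper's version is more ad hoc but entirely self-contained.

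For part (b) at $k=3$, the approaches genuinely differ. The paper observes that the constraint $\deg\hat P_1\le 1$ translates to $2b=a+c$ for $P=ap_1^2+bp_1p_2+cp_2^2+\cdots$, computes that the Hessian determinant of $H=P-Q_3$ is $-(a-c-1)^2\le 0$, and invokes the two-variable maximum principle to push the inequality from $\partial A_3$ (where $Q_3=m_3$) into the interior. You instead pass to $q$-coordinates, note that $\deg\hat P_1\le 1$ kills the $q_1^2$-coefficient so $G=\hat Q_3-\hat P$ is affine in $q_1$ for each fixed $q_2$, and then interpolate between the endpoint inequalities $G(0,q_2)\ge 0$ and $G(q_2,q_2)\ge 0$. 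Your argument is slightly more elementary (no Hessian, no maximum principle) and makes transparent exactly which coefficient the $\hat\Ac_k$-condition controls; the paper's determinant trick is slicker but less obviously extensible.

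For general $k$ your reductions are sound as far as they go: the verification that $P^{(j)}\in\hat\Ac_{j+1}$ is correct (using that $m_k$ restricted to the face where the walk lives on $\{1,\dots,j+1\}$ equals $m_{j+1}$, via Proposition~\ref{prop:Mm}), and the induction does yield $\hat P\le \hat Q_k$ on the facet $\{q_{k-1}=1\}$. The gap you identify---propagating from that facet into $T_k$ when slice-wise linearity is lost---is exactly the open problem, and the paper offers no guidance there either.
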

In the direction of Conjecture \ref{con1} we prove a weaker version of it.
\begin{prop}\label{prop:con}
	Conjecture \ref{con1} is valid for $k=3$.
\end{prop}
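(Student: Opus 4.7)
The plan splits into (i) $Q_3$ is maximal in $\Ac_3$ and (ii) $Q_3$ is largest in $\hat{\Ac}_3$. From \eqref{eq:rec} one computes $u_1 = p_1$ and $u_2 = p_1^2 + p_2$, so $m_3 = \min\{p_1,\, p_1^2 + p_2\}$ and $Q_3 = p_1(p_1+p_2)$. The pivotal algebraic fact, which I would establish first, is that $m_3 - Q_3$ vanishes on the full boundary of $A_3$: both sides equal $0$ on $\{p_1=0\}$, equal $p_1^2$ on $\{p_2=0\}$, and equal $p_1$ on $\{p_1+p_2=1\}$ (the last using $p_1^2+(1-p_1)-p_1 = (1-p_1)^2 \geq 0$). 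In particular $Q_3 \in \Ac_3$; moreover, passing to the CDF-variables $q_1 := p_1$, $q_2 := p_1+p_2$ turns $Q_3$ into $q_1 q_2$, whose restriction to $q_2 = 1$ is linear, so $Q_3 \in \hat{\Ac}_3$.

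For (i), I would take $P \in \Ac_3$ with $P \succ Q_3$ and set $R := P - Q_3$. Then $R \geq 0$ on $A_3$ by assumption, while $P \leq m_3$ forces $R \leq m_3 - Q_3$; combined with the boundary identity above, $R$ vanishes on each of the three edges of $A_3$. Extending each edge to its ambient line, a polynomial vanishing identically on $\{p_1=0\}$ is divisible by $p_1$ (and similarly for the other two factors), so $R$ is divisible by the cubic $p_1 p_2 (1-p_1-p_2)$. Since $\deg R \leq 2$, this forces $R \equiv 0$ and hence $P = Q_3$.

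For (ii), I would work in the CDF-coordinates introduced above. The region $A_3$ becomes the triangle $T = \{0 \leq q_1 \leq q_2 \leq 1\}$, $Q_3$ becomes $q_1 q_2$, and for $P \in \hat{\Ac}_3$ the polynomial $\hat{P}(q_1,q_2) = P(q_1, q_2-q_1)$ has total degree $\leq 2$. The key observation is that the constraint $\deg(\hat{P}_1) \leq 1$ translates, via expansion of $\hat{P}(q_1,1)$, into the exact vanishing of the $q_1^2$-coefficient of $\hat{P}$. Consequently $R(q_1,q_2) := q_1 q_2 - \hat{P}(q_1,q_2)$ carries no $q_1^2$ term and is therefore affine in $q_1$ for each fixed $q_2$. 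It then suffices to verify $R \geq 0$ on the two bounding edges $\{q_1=0\}$ and $\{q_1=q_2\}$ of $T$, which correspond to $\{p_1=0\}$ and $\{p_2=0\}$. On both of these, $Q_3 = m_3$ by the first paragraph and $\hat{P} \leq m_3$ by hypothesis, so $R \geq 0$ at $q_1 \in \{0, q_2\}$; affineness in $q_1$ then propagates $R \geq 0$ to all of $T$, yielding $P \leq Q_3$ on $A_3$.

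The main conceptual hurdle is (ii): the defining constraint of $\hat{\Ac}_3$ looks opaque in the original $p$-variables but reduces, after the CDF-substitution, to the clean statement ``no $q_1^2$ term'', precisely the strength needed to reduce a global inequality on the triangle to a boundary check on two of its three edges. Without this degree restriction the statement fails, in line with Theorem \ref{thm:poly}, which shows that $\Ac_3$ itself has no largest element.
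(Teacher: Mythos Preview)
Your proof is correct, and both halves take a genuinely different route from the paper's.

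For (i), the paper proceeds by successively determining the coefficients of a competitor $Q\succ Q_3$: it first argues that $Q/p_1$ has a finite limit as $p_1\to 0$ (forcing $p_1\mid Q$), then specialises $p_2=0$ and $p_1\to 1$ to pin down the remaining coefficients one at a time. Your argument is cleaner: the boundary identity $Q_3=m_3$ on $\partial A_3$ forces $R=P-Q_3$ to vanish on three non-concurrent lines, hence to be divisible by a cubic, which is impossible for $\deg R\le 2$. This buys you a one-line contradiction instead of a chain of limit computations, and makes transparent \emph{why} the degree bound $k-1$ in the definition of $\Ac_k$ is the right one here.

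For (ii), the paper keeps the $(p_1,p_2)$-coordinates, translates the $\hat\Ac_3$-constraint into the linear relation $b=a+c$ among the coefficients of a generic quadratic (there is a typo in the paper: it reads $2b=a+c$, but the subsequent determinant computation uses the correct relation), and then shows that the Hessian determinant of $H=P-Q_3$ satisfies $4\mathrm{Det}(H)=-(a-c-1)^2\le 0$; a maximum principle for quadratics with indefinite Hessian then reduces to the boundary, where $H\le 0$ since $Q_3=m_3$ there. Your approach replaces this with a coordinate change to the CDF-variables $(q_1,q_2)$, in which the $\hat\Ac_3$-condition becomes simply ``no $q_1^2$ term''. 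That makes $R=q_1q_2-\hat P$ affine in $q_1$, so nonnegativity on the two edges $q_1\in\{0,q_2\}$ suffices. This is more elementary (no maximum principle is invoked) and explains structurally what the class $\hat\Ac_k$ is designed to do: kill exactly the monomial that would otherwise obstruct the slice-by-slice convexity argument. The paper's Hessian computation, on the other hand, makes the role of the constraint less visible but hints at a possible PDE/maximum-principle viewpoint in higher $k$.
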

\section{Proofs}
\begin{proof}[Proof of Proposition \ref{prop:Mm}]
	Let $n>k$ and assume that $M:=M_k=u_n$.  We consider three separate cases. First, we assume that $1>p_1>0$. Applying \eqref{eq:rec} we get that
	\[M=\sum_{l=1}^{k}p_lu_{n-l}\]
	which is only possible provided $u_{n-l}=M$ for all $1\leq l\leq k$. Using this recursively we get that $u_l$ must be equal to $M$, for all $l\geq 1$. Since the random walk is aperiodic and  from \eqref{eq:lim} $M=\limi{n}u_n=\frac{1}{\Ebb{X_1}}<1$.
	 However, then from \eqref{eq:rec}
	\[M=u_k=p_k+\sum_{l=1}^{k-1}p_lu_{k-l}=p_k+M\sum_{l=1}^{k-1}p_l>M\]
	and we arrive at a contradiction. Second, we assume $p_j=1$ for some $1\leq j\leq k$ and then clearly $M=1$  and $n=jm$ for some $m\geq 1$. Third, if $p_1=0$ and no $p_j=1$ then we set $A=\curly{1\leq l\leq k: p_l>0}$ and from $M:=M_k=u_n$ for some $n>k$ we again deduce from \eqref{eq:rec} that $u_l=M$ for all $l\in A$. This easily yields a contradiction  from \eqref{eq:rec} when applied for $u_j$ with $j\in A$ and $j$ not the maximal element of $A$. Clearly, $m_k=0\iff p_1=0$ and then the second relation of \eqref{eq:Mm} holds true. Assume that $m:=m_k>0$.  Hence, $p_1>0$ and the random walk is aperiodic. Let $m:=m_k=u_n>0$, for some $n>k$. If $p_1=1$ then $M=m=1$ and the second relation of \eqref{eq:Mm} holds true. If $p_1<1$ then from  \eqref{eq:rec}  we have that
	\[m=u_k=p_k+\sum_{l=1}^{k-1}p_lu_{k-l}>\min_{1\leq l\leq k-1} u_l.\]
	This yields a contradiction and validates \eqref{eq:Mm}.  Since $0<p_1<1$ and thus $M<1$ then $u_n=\sum_{l=1}^{k}p_lu_{n-l},n\geq k,$ easily gives that $\curly{u_n}_{n\geq 1}$ can be neither monotone increasing nor monotone decreasing. Finally, it is elementary to repeat the first argument of this proof to check that it is impossible that $u_n=\max_{1 \leq l\leq k}\curly{u_{n-l}}=c$ for some for $n>k$ as otherwise we would again obtain that $u_n=c=p_1$, for all $n\geq 1$ and we would arrive at contradiction as above. The same can be done for the minimum and \eqref{eq:Mm1} is established. Let us next prove the monotonicity of $\curly{b_n}_{n\geq k}$ with $\curly{c_n}_{n\geq k-1}$ being the same. Assume that $b_{n+1}> b_n$. Then from \eqref{eq:rec} and the definition of $b_n$ we get a contradiction from
	\[b_n<b_{n+1}=u_{n}=\sum_{l=1}^{k}p_lu_{n-l}\leq b_n.\]
\end{proof}
\begin{proof}[Proof of Proposition \ref{prop:poly}]
	The relation \eqref{eq:poly} is immediate from \eqref{eq:rec}. The final claim is elementary from probabilistic standpoint: given $l\geq $ and $1\leq j\leq l$ we cannot have a piece of path that reaches $l$ with more than  $l| j$ occurrences of  value $j$ among the random variables construing it; this means that the power of $p_j$ does not exceed $l| j$; however, as $p_1>0$ then clearly we can reach $l$ by $l| j$ steps of size $j$ and then adding steps of value $1$. To show \eqref{eq:poly'} we observe that every summand in $P_l$ consists of probabilities of possible path to $l$ with the weighted sum of their powers by their index equalling $l$. Therefore, to total power contributed by the powers of $t$ is simply $1$.
\end{proof}
\begin{proof}[Proof of Proposition \ref{prop:poly1}]
	Clearly,  $1>Q_n$ is non-increasing in $n$ and it suffices to prove \eqref{eq:bound1}. We do so by induction with obvious basis $u_1=p_1\geq p_1$. Assume that \eqref{eq:bound1} holds for $n<k-1$. We have from \eqref{eq:rec}, the monotonicity of $v_n$ and the induction hypothesis that
	\[u_{n+1}=\sum_{j=1}^{n+1}p_ju_{n+1-j}\geq Q_n\sum_{j=1}^{n+1}p_j=Q_{n+1}.\]
	Thus the claim is furnished.
\end{proof}
\begin{proof}[Proof of Theorem \ref{thm:poly}]
	Assume that $P\in \Ac_k, k\geq 3$ is \textit{largest} element. We split the domain $A_k$, see \eqref{eq:reg}, as subregions whereby each $P_l$ defined in \eqref{eq:poly} is the smallest, that is $A_k\supseteq\bigcup_{l=1}^{k-1} A_k(l),$ $A_k(l)=A_k\cap \curly{P_l<P_{j},\forall j\ne l}.$ Since $P_l$ are polynomials then $A_k(l)$ is open set for each $1\leq l\leq k-1$ and the Lebesgue measure of $A_k\setminus \bigcup_{l=1}^{k-1} A_k(l)$  is zero. Choose $p_0\in A_k(l)$ and consider $\tilde{P}_l(p)=P_l(p)-a\langlerangle{p-p_0,p-p_0}, p,p_0\in A_k$ with $\langlerangle{\cdot,\cdot}$ standing for the scalar product in $\Rb^{k-1}$ and $a>0$ a scalar. Since $A_k$ is a compact then there exists $a_0>0$ such that $m_k\geq \tilde{P}_l$ on $A_k$ and therefore $\tilde{P}_l\in\Ac_k$. Also, $\tilde{P}_l(p_0)=P_l(p_0)=m_k(p_0)$ and since $P$ is the largest then $P(p_0)=\tilde{P}_l(p_0)=m_k(p_0)$. The latter can be made valid for any $p_0$ in any $A_k(l),1\leq l\leq k-1$ and hence $P=m_k$ which is impossible.
\end{proof}
\begin{proof}[Proof of Proposition \ref{prop:con}]
	When $k=3$ we have, see \eqref{eq:Mm},
	\begin{equation*}
		\begin{split}
			&m_3=\min\curly{p_1,p^2_1+p_2}=p_1\min\curly{1,\frac{p_2}{p_1}+p_1}.
		\end{split}
	\end{equation*}
	If we assume that $Q\succ Q_3, Q\in \Ac_3$ on $\Ac_3$ then clearly we obtain from the definition of $Q_3$, see \eqref{def:poly}, that
	\begin{equation*}
		\begin{split}
			&p_2\leq \limo{p_1}\frac{Q(p_1,p_2)}{p_1}\leq \limo{p_1}\min\curly{1,\frac{p_2}{p_1}+p_1}=1.
		\end{split}
	\end{equation*}
	Since $p_2$ is arbitrary, choosing it to tend to $1$ we get since $\deg Q\leq 2$, that $Q(p_1,p_2)=p_1(a+bp_1+cp_2)$. Hence on  $\Ac_3$
	\begin{equation*}
		\begin{split}
			& p_1+p_2\leq (a+bp_1+cp_2)\leq \min\curly{1,\frac{p_2}{p_1}+p_1}.
		\end{split}
	\end{equation*}
	Setting $p_2=0$ we first get that $a=0$ and then $b=1$. Substituting we get on  $\Ac_3$ the inequalities
	\begin{equation*}
		\begin{split}
			& p_2\leq cp_2\leq \min\curly{1-p_1,\frac{p_2}{p_1}}\leq \frac{p_2}{p_1}.
		\end{split}
	\end{equation*}
	Letting $p_1\to 1$ we conclude that $c=1$ and hence $Q=Q_3$. Next, let $P(p_1,p_2)=ap_1^2+bp_1p_2+cp_2^2+dp_1+ep_2+f\in \hat{\Ac}_3$. Since $P(p_1,1-p_1)$ has by assumption degree $1$, see \eqref{def:Ac1}, we conclude that $2b=a+c$. We consider $H:=P-Q_3$ and note that on $\partial A_3$, $Q_3=m_3$. Therefore, on $\partial A_3$ we have that $H\leq 0$. Note that $4\mbox{Det}(H)=4(a-1)c-(b-1)^2=-(a-c-1)^2$ and as it is well-known if $\mbox{Det}(H)\leq 0,$
	then $\max_{\overline G}H=\max_{\partial G}H$ ( and $\min_{\overline G}H=\min_{\partial G}H$)
	for each bounded region $G$ in the plane. Therefore, $P\le Q_3$ on $A_3$ and the claim that $Q_3$ is largest in  $\hat{\Ac}_3$ is established.
\end{proof}
	\section*{Acknowledgements}  M. Savov acknowledges that this study is financed by the European Union - NextGenerationEU, through the National Recovery and Resilience Plan of the Republic of Bulgaria, project No. BG-RRP-2.004-0008. 

\bibliographystyle{abbrv}

\begin{thebibliography}{9}
	\bibitem{Feller}
	William Feller \emph{An Introduction to Probability Theory and Its Applications}, John Wiley and Sons, New York, Vol. 1, 2nd edition, 1967
	\bibitem{Feller2}
	William Feller \emph{An Introduction to Probability Theory and Its Applications}, John Wiley and Sons, New York, Vol. 2, 2nd edition, 1970
	\bibitem{MiOm_14}
		Kosto Mitov and Edward Omey \emph{Renewal Processes}, SpringerBriefs in Statistics, Springer, ISBN: 978-3-319-05854-2, 2014
\end{thebibliography}

\end{document}